\newtheorem{theorem}{Theorem}
\newtheorem{corollary}{Corollary}
\newtheorem{definition}{Definition}
\newtheorem{lemma}{Lemma}
\newtheorem{proposition}{Proposition}
\newtheorem{remark}{Remark}
\begin{document}
\title[RLWS in $\Bbb{S}^{3}$]{Rotational Linear Weingarten Surfaces\\
into the Euclidean sphere}

\author[A. Barros]{A. Barros$^{1}$}
\author[J. Silva]{J. Silva$^{2}$}
\author[P. Sousa]{P. Sousa$^{3}$}




\begin{abstract}

The aim of this paper is to present a complete description of all rotational linear
Weingarten surface into the  Euclidean sphere $\Bbb{S}^{3}$. These surfaces are
characterized by a linear relation $aH+bK=c$, where $H$ and $K$ stand for their
mean and Gaussian curvatures, respectively, whereas $a,b$ and $c$ are real constants.

\medskip

\noindent\textit{$\bf{Key \ words:}$} Rotational surfaces, Linear Weingarten
surfaces.
\vspace{.2cm}

\noindent\textit{$\bf{MSC:}$} 53A10, 53C45.
\medskip

\begin{center}
Universidade Federal do Cear\'{a}, Departamento de
Matem\'{a}tica, 60455-760, Pici - Fortaleza - CE, Brazil\\
$^{1}$email: abdenago@pq.cnpq.br
\end{center}
\begin{center}
Universidade Federal do Piau\'{i}, Departamento de
Matem\'{a}tica, 64049-550, Ininga - Teresina - PI, Brazil\\
$^{2}$email: jsilva@ufpi.edu.br\\
$^{3}$email: paulosousa@ufpi.edu.br
\end{center}

\end{abstract}

\maketitle

\section{Introduction and statement of results}

The study of Weingarten's surface $M^{2}$ into the Euclidean space $\Bbb{R}^3$
remount to  classical works development around the middle of nineteenth  century
by Weingarten contained in the papers \cite{Weingarten1} and \cite{Weingarten2}.
Essentially these surfaces are a natural generalization of one with constant
curvature, more precisely,  they satisfy a relation $W(k_{1},k_{2})=0$, where
$k_{1}$ and $k_{2}$ stand for the principal curvatures of the surface while $W$
is a smooth function defined over the Euclidean space $\Bbb{R}^2$, distinguishing
when $W(k_{1},k_{2})=f(H^2-K),$ where $H$ and $K$ denote, respectively, the mean
and the Gaussian curvatures of $M^2$. We point out that replacing the Euclidean space
$\Bbb{R}^3$ either by the Euclidean sphere $\Bbb{S}^{3}$ or by the hyperbolic space
$\Bbb{H}^{3}(-1)$ we have the same definition. In the later case the work due to
Bryant \cite{Bryant} when $f(H^2-K)=c,$ for a constant $c$, retake this subject
after a long delay,  as well as works due to Rosenberg and Sa Earp \cite{SaEarp}.
When the ambient space is the Euclidean sphere $\Bbb{S}^{3}$ the case of rotational
surfaces was described by Dajczer and do Carmo \cite{cd} only for constant mean
curvature. In recent works Almeida et al. \cite{Sebastiao} and Li et al. \cite{LiWei}
obtained some results for Weingarten surfaces of the Euclidean three sphere. For
the special case $U(H,K)=0$, where $U$ is an affine function, Lopez \cite{Lopez}
described such surfaces into the Euclidean space $\Bbb{R}^{3}$ with an additional
requirement on the discriminant of $U.$  Our purpose here is to extend this later
description for a class of rotational surfaces  into the Euclidean sphere $\Bbb{S}^{3}.$
Indeed, we shall give a special attention for such surfaces satisfying  $U(H,K)=0$,
where the function satisfies
\begin{equation}
\label{LWsurfaces}
U(H,K)=aH+bK-c,
\end{equation}being $a,b,c\in\Bbb{R}$. Let us call such class of surfaces as Rotational
Linear Weingarten Surfaces or shortly by RLWS.

We can assume, without loss of generality, that $c\geq0$. Moreover, we choose
$a\not=0$ and $b\not=0$, since the cases $a=0$ and $b=0$ were analyzed by Palmas
in \cite{Palmas1} and \cite{Palmas2}. One fundamental ingredient to understand the
behavior of a RLWS as well as its qualitative properties is the sign of the its
discriminant which is defined according to $\Delta=a^{2}+4bc$. In the quoted paper
L\'{o}pez \cite{Lopez} described RLWS of hyperbolic type $(\Delta<0)$ in the Euclidean
space $\Bbb{R}^{3}$ under a suitable assumption.

Following Dajczer and do Carmo \cite{cd} we  shall use the terminology  of rotational
surface into $\Bbb{S}^{3}$ as a surface invariant by the orthogonal group $O(2)$
consider as a subgroup of the isometries group of $\Bbb{S}^{3}$. Hence we can consider
a profile curve $\gamma$ to describe the desired surface. Initially let us parametrize
the profile curve $\gamma$ in $\Bbb{S}^{2}$ by $\gamma(s)=\big(x(s),y(s),z(s)\big)$,
with $x(s)\ge0.$ If we choose $\varphi(t)=(\cos t,\sin t)$ as an element in $O(2)$
the rotational surface generated by $\gamma$ is parametrized as follows
\[
\psi:M^{2}\hookrightarrow\Bbb{S}^{3}\subset\Bbb{R}^{4}
\]
\[
(s,t)\mapsto(x(s)\cos t,x(s)\sin t,y(s),z(s)).
\]

Moreover,  we can choose the parameter $s$ to be the arc length of $\gamma.$ Then
using this parameter we obtain
\[
x^{2}(s)+y^{2}(s)+z^{2}(s)=1 \; , \; \;
\dot{x}^{2}(s)+\dot{y}^{2}(s)+\dot{z}^{2}(s)=1.
\]

In order to compute the principal curvatures of a rotational surface  $M^{2}\subset
\Bbb{S}^{3}$ we remember a fundamental lemma due to Dajczer and do Carmo \cite{cd}.

\begin{lemma}[Dajczer-do Carmo]\label{PC}
Let $M^{2}$ be a rotational surface of $\Bbb{S}^{3}$ under the above choices. Then
its principal curvatures $k_{1}$ and $k_{2}$ are given by
\[
k_{1}=-\frac{\sqrt{1-x^{2}-\dot{x}^{2}}}{x} \; \;   and \; \;
k_{2}=\frac{\ddot{x}+x}{\sqrt{1-x^{2}-\dot{x}^{2}}}.
\]
\end{lemma}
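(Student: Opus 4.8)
The plan is to compute the first and second fundamental forms of $\psi$ by regarding $M^{2}$ as a submanifold of $\Bbb{R}^{4}$, and then to read off $k_{1},k_{2}$ as the eigenvalues of the shape operator. First I would differentiate to get $\psi_{s}=(\dot{x}\cos t,\dot{x}\sin t,\dot{y},\dot{z})$ and $\psi_{t}=(-x\sin t,x\cos t,0,0)$; using $\dot{x}^{2}+\dot{y}^{2}+\dot{z}^{2}=1$ this yields the coefficients of the first fundamental form $E=1$, $F=0$, $G=x^{2}$. In particular $(s,t)$ are orthogonal parameters, so it suffices to check that the second fundamental form is also diagonal in these coordinates and then to divide its diagonal entries by $E$ and by $G$.

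The key step is to identify a unit normal $\xi$ of $M^{2}$ in $\Bbb{S}^{3}$. Write $\gamma=(x,y,z)$; since $|\gamma|^{2}\equiv1$ we have $\langle\gamma,\dot\gamma\rangle=0$, hence $|\gamma\times\dot\gamma|^{2}=|\gamma|^{2}|\dot\gamma|^{2}-\langle\gamma,\dot\gamma\rangle^{2}=1$, so $N_{0}:=\gamma\times\dot\gamma$ is a unit vector completing $\{\gamma,\dot\gamma\}$ to an orthonormal frame of $\Bbb{R}^{3}$. I would then verify that $\xi(s,t):=(\mu\cos t,\mu\sin t,\nu,\rho)$, where $(\mu,\nu,\rho):=N_{0}$, is a unit vector orthogonal to $\psi$, $\psi_{s}$ and $\psi_{t}$. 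Because the position vector $\psi$ is itself the unit normal of $\Bbb{S}^{3}$ in $\Bbb{R}^{4}$ and is orthogonal to $\xi$, the second fundamental form of $M^{2}\subset\Bbb{S}^{3}$ is just $h_{ij}=\langle\psi_{ij},\xi\rangle$. Computing $\psi_{ss}=(\ddot{x}\cos t,\ddot{x}\sin t,\ddot{y},\ddot{z})$, $\psi_{st}=(-\dot{x}\sin t,\dot{x}\cos t,0,0)$, $\psi_{tt}=(-x\cos t,-x\sin t,0,0)$ and pairing with $\xi$ gives $h_{11}=\langle\ddot\gamma,N_{0}\rangle$, $h_{12}=0$, $h_{22}=-x\mu$. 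Hence the shape operator is diagonal and $k_{1}=h_{22}/G=-\mu/x$, $k_{2}=h_{11}/E=\langle\ddot\gamma,N_{0}\rangle$.

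It then remains to express $\mu$ and $\langle\ddot\gamma,N_{0}\rangle$ through $x$ only. Expanding the constant vector $e_{1}=(1,0,0)$ in the orthonormal basis $\{\gamma,\dot\gamma,N_{0}\}$ gives $e_{1}=x\gamma+\dot{x}\dot\gamma+\mu N_{0}$, so $1=x^{2}+\dot{x}^{2}+\mu^{2}$ and, after orienting $\gamma\times\dot\gamma$ so that the stated sign holds, $\mu=\sqrt{1-x^{2}-\dot{x}^{2}}$; thus $k_{1}=-\sqrt{1-x^{2}-\dot{x}^{2}}/x$. Next, differentiating $|\gamma|^{2}\equiv1$ twice and $|\dot\gamma|^{2}\equiv1$ once gives $\langle\ddot\gamma,\gamma\rangle=-1$ and $\langle\ddot\gamma,\dot\gamma\rangle=0$, so $\ddot\gamma=-\gamma+\langle\ddot\gamma,N_{0}\rangle N_{0}$; comparing first coordinates, $\ddot{x}=-x+\langle\ddot\gamma,N_{0}\rangle\,\mu$, whence $k_{2}=\langle\ddot\gamma,N_{0}\rangle=(\ddot{x}+x)/\sqrt{1-x^{2}-\dot{x}^{2}}$. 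The only genuine difficulty I foresee is bookkeeping --- keeping the two unit-length constraints on $\gamma$ straight and, above all, fixing the orientation of the normal so that the signs in the statement come out correctly (and tacitly working in the region where $x>0$ and $1-x^{2}-\dot{x}^{2}>0$, so that the expressions make sense); once the moving frame $\{\gamma,\dot\gamma,\gamma\times\dot\gamma\}$ is set up, everything reduces to these short computations.
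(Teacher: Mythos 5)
Your proof is correct and complete. The paper itself gives no argument here --- it simply quotes the lemma from Dajczer--do Carmo \cite{cd} --- and your moving-frame computation (orthonormal frame $\{\gamma,\dot\gamma,\gamma\times\dot\gamma\}$ along the profile curve, rotated into a unit normal $\xi$ of $M^{2}$ in $\Bbb{S}^{3}$, with $h_{ij}=\langle\psi_{ij},\xi\rangle$ because $\xi\perp\psi$) is essentially the standard derivation from that reference. The sign and orientation caveats you flag are exactly the right ones to watch, and your identifications $\mu=\sqrt{1-x^{2}-\dot{x}^{2}}$ and $\ddot\gamma=-\gamma+\langle\ddot\gamma,N_{0}\rangle N_{0}$ close the argument cleanly.
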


With this setting we present the fundamental relation which characterizes a RLWS
in the Euclidean sphere  $\Bbb{S}^{3}$:
\begin{equation}\label{eq2}
\frac{a}{2}x\sqrt{1-x^{2}-\dot{x}^{2}}+\frac{b}{2}(x^{2}+\dot{x}^{2})
+\frac{c}{2}x^{2}=\alpha,
\end{equation}where $\alpha$ is a constant.

Let us denote by $M_{\alpha}$ the RLWS associated with the function $x$, solution
of the equation (\ref{eq2}) and the parameter $\alpha.$ Moreover, let us consider
the special value $\alpha_{0}=\displaystyle\frac{\sqrt{a^{2}+(b+c)^{2}}}{4}
+\displaystyle\frac{b+c}{4}.$

\begin{theorem}\label{RLWsurfaces}
Let  $M_{\alpha}$ be a RLWS with $a>0$ and $\Delta\not=0$. Then we have:
\begin{enumerate}
\item[1.] $\alpha\in[\min\{0,\frac{b}{2}\},\alpha_{0}]$;

\item[2.] There are no complete immersed RLWS $M_{\alpha}\subset\Bbb{S}^{3}$
that such
\[
\alpha\in\left(\min\{0,\frac{b}{2}\},\max\{0,\displaystyle\frac{b}{2}\}\right)
\cup\left(\frac{b}{2},\frac{b+c}{2}\right);
\]

\item[3.] For
any $\alpha\in(\max\{0,\frac{b+c}{2}\},\alpha_{0}),\,\,M_{\alpha}$
is a complete immersed RLWS in $\Bbb{S}^{3}$;

\item[4.] There is only one complete immersed RLWS (Clifford torus) in
$\Bbb{S}^{3}$ that such $\alpha=\alpha_{0}$.
\end{enumerate}
\end{theorem}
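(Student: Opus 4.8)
The plan is to translate the whole problem into the geometry of the level sets of the left-hand side of (\ref{eq2}). Set $F(x,\dot x)=\frac a2x\sqrt{1-x^2-\dot x^2}+\frac b2(x^2+\dot x^2)+\frac c2x^2$ on the closed half-disc $D=\{x\ge 0,\ x^2+\dot x^2\le 1\}$; here $x^2+\dot x^2\le 1$ is automatic because $x$ and $\dot x$ are the first coordinates of the orthonormal pair $\gamma(s),\dot\gamma(s)\in\mathbb S^2$. Along any RLWS one has $\alpha=F(x(s),\dot x(s))$, so Part 1 reduces to computing $F(D)$. For fixed $\rho:=x^2+\dot x^2$ we have $\partial F/\partial x=\frac a2\sqrt{1-\rho}+cx\ge 0$, so $F$ is maximised on $\{\dot x=0\}$ and minimised on $\{x=0\}$: on $\{\dot x=0\}$ one gets $G(\rho):=\frac a2\sqrt{\rho(1-\rho)}+\frac{b+c}2\rho$, and the substitution $\rho=\sin^2\theta$ gives $\max G=\frac{b+c}4+\frac14\sqrt{a^2+(b+c)^2}=\alpha_0$ (the maximum of $\frac a4\sin\phi-\frac{b+c}4\cos\phi$ over $[0,\pi]$, attained in the interior since $a\neq 0$); on $\{x=0\}$ one gets $\frac b2\rho$, whose extreme values over $[0,1]$ are $\min\{0,\tfrac b2\}$ and $\tfrac b2$. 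Hence $F(D)=[\min\{0,\tfrac b2\},\alpha_0]$, which is Part 1.

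For Parts 2--4 I would first observe that (\ref{eq2}) is a first integral of $aH+bK=c$: differentiating $F$ along a solution and substituting Lemma \ref{PC} (with $w:=\sqrt{1-x^2-\dot x^2}$, so $w=-xk_1$ and $\ddot x+x=wk_2$) collapses $\frac{d}{ds}F$ to $x\dot x\,(c-aH-bK)=0$; conversely a smooth solution of (\ref{eq2}) with $x\dot x\neq 0$ solves $aH+bK=c$. So each profile curve lies on a level set $C_\alpha=\{F=\alpha\}\cap D$ and everything becomes a phase-plane analysis in $(x,\dot x)$. Computing $\nabla F$, and using $\Delta=a^2+4bc\neq0$ together with $a>0$, shows that $F$ has exactly one critical point in the interior of $D$, namely its maximum $\alpha_0$, attained at $(x_0,0)$ with $x_0^2=\rho^*$ the unique interior maximiser of $G$; the turning points of a profile curve (points of $C_\alpha$ with $\dot x=0$) are exactly the $(\sqrt\rho,0)$ with $G(\rho)=\alpha$, and $G$ is unimodal on $[0,1]$ with $G(0)=0$, $G(1)=\tfrac{b+c}2$, $\max G=\alpha_0$. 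Finally, on $\partial D$ one has $F=\tfrac b2+\tfrac c2x^2$ on the spherical arc and $F=\tfrac b2\dot x^2$ on the axis segment, so $\sup_{\partial D}F=\max\{0,\tfrac{b+c}2\}<\alpha_0$.

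Part 3: for $\alpha\in(\max\{0,\tfrac{b+c}2\},\alpha_0)$ we have $\alpha>\sup_{\partial D}F$, so $C_\alpha$ lies in the open region $\{x>0,\ w>0\}$; being a compact regular level set enclosing the unique critical point, it is a single smooth closed curve (``orbit'') about $(x_0,0)$ with exactly two turning points $\sqrt{\rho_1}<x_0<\sqrt{\rho_2}$, where $G(\rho_i)=\alpha$. One then checks these turning points are non-degenerate and that the orbit avoids the loci where the first-order equation (\ref{eq2}) fails to determine $\ddot x$ --- in particular, when $b>0$, the set $\{2bw=ax\}$, i.e. $\{k_1=-\tfrac a{2b}\}$, on which $aH+bK=c$ would force $k_2=\pm\infty$ (again using $\Delta\neq0$) --- so that traversing $C_\alpha$ at unit speed, passing smoothly through each turning point, produces a periodic $x(s)$ defined for all $s\in\mathbb R$; the associated rotational surface is a complete immersed torus $M_\alpha$. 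Part 4: at $\alpha=\alpha_0$ the two turning values coincide and $C_{\alpha_0}$ is the single point $(x_0,0)$ (again because $\sup_{\partial D}F<\alpha_0$ and the maximum is unique), forcing $x\equiv x_0$; this constant solution is the flat torus $\mathbb S^1(x_0)\times\mathbb S^1(\sqrt{1-x_0^2})$, a Clifford torus, and uniqueness is immediate since $C_{\alpha_0}$ is a single point.

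Part 2: if $\alpha$ lies in one of the two excluded intervals then $\alpha\le\sup_{\partial D}F$ and $\alpha\notin\{\tfrac b2,\alpha_0\}$, so every component of $C_\alpha$ is an arc meeting $\partial D$ (a closed component would surround the interior critical point, which sits at level $\alpha_0$); and $C_\alpha$ cannot close up at the axis, because a profile curve reaching $\{x=0\}$ extends to a smooth immersion only when $\dot x^2=1$ there, i.e. only when $\alpha=\tfrac b2$. Hence the maximal smooth solution on such an arc reaches, after finite arc length, a point of $\{x=0\}$ with $\dot x^2<1$, or a point of $\{w=0,\ x>0\}$ (where $k_2$ blows up), or a point of the forbidden locus $\{2bw=ax\}$; in every case there is no complete immersed $M_\alpha$. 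I expect the hardest part to be exactly this completeness bookkeeping: isolating which sub-arc of $C_\alpha$ is genuinely traced by a smooth RLWS (deleting the forbidden locus $\{2bw=ax\}$ when $b>0$), proving that the arc-length parameter exhausts $\mathbb R$ precisely in the case of Part 3 and otherwise terminates at a finite value, and carrying out the requisite local study of (\ref{eq2}) at the turning points and at each degeneracy locus --- all of it split according to the sign of $b$ and the ordering of $0,\tfrac b2,\tfrac{b+c}2$.
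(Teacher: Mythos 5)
Your strategy is the same as the paper's: treat (\ref{eq2}) as a first integral, work in the phase plane $(x,\dot x)\in D$, and read off completeness from the geometry of the level sets $C_\alpha$. Your Part 1 is a cleaner route to the same conclusion as Lemma \ref{MinMax} (monotonicity of $F$ in $x$ along each circle $x^2+\dot x^2=\rho$, then the one-variable function $G$), and your Parts 2 and 4 match Corollary \ref{corol1} and the last step of the paper's proof. So far, so good.

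The genuine problem is in Part 3, at exactly the step you flag and then defer: the claim that for $\alpha\in(\max\{0,\frac{b+c}{2}\},\alpha_0)$ the orbit ``avoids the forbidden locus $\{2bw=ax\}$'' (where $w=\sqrt{1-x^2-\dot x^2}$, i.e.\ where the coefficient of $\ddot x$ in (\ref{EqRLW}) vanishes and, since $\Delta\neq0$, $k_2$ is forced to blow up). This is not something that can simply be ``checked''; it is false for admissible parameters. On that locus one computes $F=\frac{b}{2}+\frac{\Delta}{8b}\,x^2$, so $C_\alpha$ meets it at points with $\dot x\neq0$ whenever $b>c$ and $\alpha$ is slightly above $\frac{b+c}{2}$. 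Concretely, take $a=b=2$, $c=0$, $\alpha=1.1$: then $\max\{0,\frac{b+c}{2}\}=1<1.1<\alpha_0=\frac{1+\sqrt2}{2}$, yet $(u,v)=(\sqrt{0.4},\sqrt{0.5})$ lies on $C_{1.1}$ and satisfies $au=2bw$ with $v\neq0$. At such a point $\nabla F=(F_u,0)$ with $F_u\neq0$, so the level curve has a vertical tangent while the trajectory's velocity $(\dot x,\ddot x)$ has nonzero horizontal component $\dot x=v$; the orbit reaches this point in finite arc length with $\ddot x\to\infty$ (equivalently $k_2\to\infty$, since $2H+2K=0$ forces $k_2=-k_1/(1+2k_1)$ and $k_1=-w/x=-\tfrac12$ there). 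Hence the maximal solution terminates at a finite $s$ at an interior point of $D$, and $M_{1.1}$ is not a complete immersion. So your proof of Part 3 cannot be completed as written without restricting the parameters or the range of $\alpha$; to be fair, the paper's own proof has the identical gap, since it passes from ``$C_\alpha$ is a smooth simple closed curve'' (Corollary \ref{corol1}) to ``$x$ is defined for all $s$'' without ever examining the locus $F_v=0$, $v\neq0$ that you correctly identified. The same caveat affects your Part 2 bookkeeping (you must rule out the orbit being trapped away from $\partial D$), though there the conclusion (non-completeness) is only helped by additional singularities.
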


Finally we prove the the following result.
\begin{theorem}\label{family}
There is a family of complete immersed RLWS in $\Bbb{S}^{3}$ that does
not contain isoparametric surfaces.
\end{theorem}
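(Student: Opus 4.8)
The plan is to read the required family off directly from Theorem~\ref{RLWsurfaces}. I would set
\[
\mathcal{F}=\Bigl\{\,M_{\alpha}\ :\ \alpha\in\bigl(\max\{0,\tfrac{b+c}{2}\},\ \alpha_{0}\bigr)\,\Bigr\},
\]
and first check that this interval is nonempty, so that $\mathcal{F}$ is a genuine one‑parameter family. Since $a>0$ we have $\sqrt{a^{2}+(b+c)^{2}}>|b+c|$, whence
$\alpha_{0}>\dfrac{|b+c|+(b+c)}{4}=\max\{0,\tfrac{b+c}{2}\}$. By item~3 of Theorem~\ref{RLWsurfaces} every $M_{\alpha}\in\mathcal{F}$ is a complete immersed RLWS, so the whole content of Theorem~\ref{family} reduces to showing that no $M_{\alpha}\in\mathcal{F}$ is isoparametric.

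To prove this I would argue by contradiction. Suppose some $M_{\alpha}\in\mathcal{F}$ is isoparametric, i.e.\ its principal curvatures $k_{1},k_{2}$ are constant. By Lemma~\ref{PC}, writing $p=-k_{1}\ge 0$, the profile function satisfies $1-x^{2}-\dot{x}^{2}=p^{2}x^{2}$ identically. If $x$ is constant, then the surface is, up to reparametrisation, a product torus $\Bbb{S}^{1}(x)\times\Bbb{S}^{1}(\sqrt{1-x^{2}})$, i.e.\ a Clifford torus, so item~4 of Theorem~\ref{RLWsurfaces} forces $\alpha=\alpha_{0}$, contradicting $\alpha<\alpha_{0}$. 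If $x$ is non‑constant, differentiating $\dot{x}^{2}=1-(1+p^{2})x^{2}$ gives $\dot x\bigl(\ddot{x}+(1+p^{2})x\bigr)=0$, hence $\ddot{x}=-(1+p^{2})x$ wherever $\dot x\neq0$, and then everywhere by continuity; substituting this into the formula for $k_{2}$ in Lemma~\ref{PC} shows $k_{2}=-p=k_{1}$, so $M_{\alpha}$ is umbilic. It then only remains to insert $\sqrt{1-x^{2}-\dot{x}^{2}}=px$ and $x^{2}+\dot{x}^{2}=1-p^{2}x^{2}$ into the characterising relation (\ref{eq2}); a one‑line computation turns it into
\[
\frac{b}{2}+\frac{x^{2}}{2}\bigl(ap-bp^{2}+c\bigr)=\alpha .
\]
Because $x$ is non‑constant this forces $ap-bp^{2}+c=0$ and $\alpha=\dfrac{b}{2}$. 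But the normalisation $c\ge 0$ adopted for RLWS gives $\dfrac{b}{2}\le\dfrac{b+c}{2}\le\max\{0,\tfrac{b+c}{2}\}$, which is incompatible with $\alpha>\max\{0,\tfrac{b+c}{2}\}$. This contradiction shows that $\mathcal{F}$ contains no isoparametric surface, which is Theorem~\ref{family}.

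I do not expect a serious obstacle here once Theorem~\ref{RLWsurfaces} is in hand: the argument is essentially bookkeeping with the two endpoints $\max\{0,\tfrac{b+c}{2}\}$ and $\alpha_{0}$ and with the two normalisations $a>0$, $c\ge 0$. The one point that will need a little care is the implication ``$x$ non‑constant $\Rightarrow M_{\alpha}$ umbilic'': it rests on differentiating $1-x^{2}-\dot{x}^{2}=p^{2}x^{2}$, which is legitimate on the open set $\{\dot x\neq0\}$, and then on extending $\ddot x=-(1+p^{2})x$ to the whole domain — here one invokes the real‑analyticity of the profile curve of a RLWS, or equivalently observes that if $\dot x$ vanishes on an interval then $x$ is locally constant and we are back in the Clifford case already handled. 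Once this is settled, the chain $k_{1}=\mathrm{const}\Rightarrow k_{2}=k_{1}\Rightarrow\alpha=\tfrac b2$ (or $\alpha=\alpha_{0}$ in the degenerate constant‑profile case) is immediate from Lemma~\ref{PC} and (\ref{eq2}).
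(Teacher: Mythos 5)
Your argument is correct, but it follows a genuinely different and in fact more general route than the paper. The paper proves Theorem \ref{family} by specializing to $c=0$: it invokes Lemma \ref{FamilyCurves}, which assumes $c=0$, $x\neq0$ and $\dot{x}\neq0$ everywhere, to show that $k_{1}$ constant forces $\alpha=\tfrac{b}{2}$, and then observes that $\alpha=\tfrac b2$ is excluded for the levels $\alpha\in(\max\{0,\tfrac b2\},\alpha_{0})$. You instead keep $c\geq0$ arbitrary and substitute the isoparametric condition $1-x^{2}-\dot{x}^{2}=p^{2}x^{2}$ directly into the first integral (\ref{eq2}), obtaining $\tfrac b2+\tfrac{x^{2}}{2}(ap-bp^{2}+c)=\alpha$ and hence $\alpha=\tfrac b2$ for non-constant $x$; this buys the stronger conclusion that the \emph{entire} family of item 3 of Theorem \ref{RLWsurfaces}, for every admissible $(a,b,c)$, contains no isoparametric surface, and it sidesteps the paper's somewhat delicate step of ``assuming without loss of generality that $\dot{x}\neq0$'' on a closed level curve (where $\dot x$ must vanish at the extrema of $x$) — your continuity/analyticity remark handles exactly that point. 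Two small spots deserve a patch: (a) in the constant-$x$ case, item 4 of Theorem \ref{RLWsurfaces} states the implication $\alpha=\alpha_{0}\Rightarrow$ Clifford torus, not its converse; what you actually need is that a constant solution of the Weingarten ODE must sit at a critical point of $F$ on the $u$-axis, and by Lemmas \ref{CriticalPoint} and \ref{MinMax} the only such point in $int(D)$ has level $\alpha_{0}$, which gives the same contradiction; (b) when $p=0$ the formula for $k_{2}$ in Lemma \ref{PC} degenerates, so the intermediate claim ``$M_{\alpha}$ is umbilic'' should be bypassed in that case — but your final substitution into (\ref{eq2}) does not use it and still yields $c=0$ and $\alpha=\tfrac b2$, so the contradiction survives. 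With those two clarifications the proof is complete.
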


\section{Preliminaries and basic results}
From now on we shall choose the discriminant $\Delta \ne 0$ and $a> 0$. An
analogous analysis can be made for the case $a<0$. First of all we begin this
section by proving a lemma that establishes the fundamental relation (\ref{eq2}).

\begin{lemma}\label{EqIntegral1}
A surface $M^{2}\subset\Bbb{S}^{3}$ is RLWS if, and only if, the function $x$
satisfies the following differential equation:
\begin{equation*}
\frac{a}{2}x\sqrt{1-x^{2}-\dot{x}^{2}}+\frac{b}{2}(x^{2}+\dot{x}^{2})
+\frac{c}{2}x^{2}=\alpha,
\end{equation*}where $\alpha$ is a constant.
\end{lemma}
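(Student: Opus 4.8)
The plan is to show that the Weingarten relation $aH+bK=c$ for a rotational surface is equivalent, after substituting the Dajczer–do Carmo formulas for $k_1$ and $k_2$, to the first-order ODE in the statement. First I would write the curvatures $H$ and $K$ in terms of the profile function $x$: recalling $H=\frac{1}{2}(k_1+k_2)$ and $K=k_1k_2 + 1$ (the extrinsic Gauss equation in $\Bbb{S}^3$ contributes the $+1$), I would plug in
\[
k_1=-\frac{\sqrt{1-x^{2}-\dot{x}^{2}}}{x},\qquad
k_2=\frac{\ddot{x}+x}{\sqrt{1-x^{2}-\dot{x}^{2}}},
\]
from Lemma \ref{PC}. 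A convenient abbreviation is $w=w(s)=\sqrt{1-x^{2}-\dot{x}^{2}}$, so that $k_1k_2 = -\frac{\ddot x + x}{x}$ and the relation $aH+bK-c=0$ becomes a (still second-order) equation involving $x,\dot x,\ddot x$ and $w$.

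The heart of the argument is to recognize that this second-order equation is an exact derivative. Concretely, I would compute $\frac{d}{ds}$ of the left-hand side of (\ref{eq2}),
\[
\Phi(s):=\frac{a}{2}\,x\,w+\frac{b}{2}\,(x^{2}+\dot{x}^{2})+\frac{c}{2}\,x^{2},
\]
using $\dot w = \dfrac{-x\dot x - \dot x \ddot x}{w} = -\dfrac{\dot x(x+\ddot x)}{w}$. Differentiating term by term and collecting, one gets
\[
\dot\Phi = \frac{a}{2}\Bigl(\dot x\, w - \frac{x\dot x(x+\ddot x)}{w}\Bigr) + b\,\dot x(x+\ddot x) + c\,x\dot x,
\]
and the task is to check that this equals $\dot x$ times the left-hand side of $aH+bK-c=0$ (up to an overall nonzero factor). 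Factoring out $\dot x$, the bracket that remains should be exactly a constant multiple of $a(k_1+k_2)/2 + b(k_1k_2+1) - c$; expanding $k_1+k_2$ and $k_1k_2$ in terms of $x,\ddot x,w$ and clearing the denominator $w$ makes this a routine polynomial identity in $x,\dot x,\ddot x,w$ subject to the single relation $w^2 = 1-x^2-\dot x^2$. This identification shows $\dot\Phi\equiv 0$ along any RLWS, hence $\Phi$ is constant; calling that constant $\alpha$ gives (\ref{eq2}). Conversely, if $x$ satisfies (\ref{eq2}) with $\Phi\equiv\alpha$, then $\dot\Phi=0$, and by the same identity $\dot x\bigl(aH+bK-c\bigr)=0$, so on the open set where $\dot x\neq0$ the Weingarten relation holds, and it extends to the isolated zeros of $\dot x$ by continuity of $H$ and $K$.

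The main obstacle is purely computational: verifying that $\dot\Phi$ factors through $aH+bK-c$ requires careful bookkeeping with the square root $w$ and its derivative, and one must be attentive to the points where $x=0$ or $w=0$ (the ``poles'' of $k_1$ and $k_2$), handling them by a limiting argument or by noting that the surface is smooth there so the curvatures—and hence $\Phi$—stay bounded and the identity persists. No conceptual difficulty arises beyond this; the exactness of the equation is the one nonobvious point, and once the derivative identity is in hand both directions of the equivalence follow immediately.
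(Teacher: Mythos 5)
Your argument is essentially the paper's own proof: substitute the Dajczer--do Carmo curvatures into $aH+bK=c$ and observe that the resulting second-order equation, multiplied by $-x\dot x$, is exactly $\frac{d}{ds}$ of $\Phi=\frac{a}{2}xw+\frac{b}{2}(x^{2}+\dot x^{2})+\frac{c}{2}x^{2}$, so $\Phi$ is a first integral; the converse and the care needed at zeros of $\dot x$ are as you describe (the paper in fact skips the latter point). The one discrepancy is your convention $K=k_1k_2+1$: equation (\ref{EqRLW}) shows the paper takes $K$ to be the extrinsic curvature $k_1k_2$, and with your convention the same computation yields the conservation law with $c$ replaced by $c-b$ in the coefficient of $x^{2}/2$, so to recover (\ref{eq2}) verbatim you must use $K=k_1k_2$.
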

\begin{proof}Taking into account that $aH+bK=c$ we use Lemma \ref{PC} to arrive at
\begin{equation}\label{EqRLW}
\frac{a}{2}\left(\frac{\ddot{x}+x}{\sqrt{1-x^{2}-\dot{x}^{2}}}
-\frac{\sqrt{1-x^{2}-\dot{x}^{2}}}{x}\right)-b\cdot\frac{\ddot{x}+x}{x}=c.
\end{equation}

\noindent Now, note that
\[
-\frac{d}{ds}\left(\frac{a}{2}x\sqrt{1-x^{2}-\dot{x}^{2}}+\frac{b}{2}
(x^{2}+\dot{x}^{2})\right)=
\]
\[
x\dot{x}\left[\frac{a}{2}\left(\frac{\ddot{x}+x}{\sqrt{1-x^{2}-\dot{x}^{2}}}
-\frac{\sqrt{1-x^{2}-\dot{x}^{2}}}{x}\right)-b\cdot\frac{\ddot{x}+x}{x}\right].
\]

\noindent Therefore, the function $x$ satisfies the equation (\ref{EqRLW}) if, and
only if,
\[
\frac{a}{2}x\sqrt{1-x^{2}-\dot{x}^{2}}+\frac{b}{2}(x^{2}+\dot{x}^{2})
+\frac{c}{2}x^{2}=\alpha,
\] where $\alpha\in\Bbb{R}$ finishing the proof of the lemma.
\end{proof}

\begin{definition}
A solution of (\ref{eq2}) is complete if either $x$ is defined for all
$s\in \Bbb{R}$ or if the pair $(x,\dot{x})$  admits only $(0,\pm1)$ as limit values.
\end{definition}

When $(x,\dot{x})$ has $(0,1)$ or $(0,-1)$ as limit value, we deduce that the
profile curve meets orthogonally the axis of rotation. Therefore, complete
solutions of the equation (\ref{EqIntegral1}) give rise to a complete RLWS.

In order to describe the behavior of a solution of equation (\ref{eq2}) we
follow the techniques contained in the next paper  \cite{Palmas1} due to Palmas.
Initially we note that a local solution $x$ of the equation (\ref{eq2}) paired
with its first derivative  $(x,\dot{x})$, is contained on a level curve of the
function $F:D\to\Bbb{R}$ defined by
\[
F(u,v)=\frac{a}{2}u\sqrt{1-u^{2}-v^{2}}+\frac{b}{2}(u^{2}+v^{2})+\frac{c}{2}u^{2},
\]

\noindent where $D=\{(u,v)\in\Bbb{R}^{2}:u\geq0 \; {\rm and} \; u^{2}+v^{2}\leq1\}$.

\begin{lemma}\label{CriticalPoint}
Let $\mathcal{P}:=\{(u,v)\in int(D)
:\frac{\partial F}{\partial u}(u,v)=\frac{\partial F}{\partial v}(u,v)=0\}$ be the
set of critical points of $F$ contained in the interior of $D$. Then we have:
\begin{enumerate}
\item[(i)] $\mathcal{P}=\{(u_{+},0)\}\Leftrightarrow b+c\geq0$;
\item[(ii)] $\mathcal{P}=\{(u_{-},0)\}\Leftrightarrow b+c\leq0$,
\end{enumerate}
\noindent where $u_{\pm}^{2}=\frac{1}{2}\Big(1\pm
\sqrt{\frac{(b+c)^{2}}{a^{2}+(b+c)^{2}}}\Big)$.
\end{lemma}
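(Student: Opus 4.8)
The plan is to compute the two partial derivatives of $F$ on $\operatorname{int}(D)$ and solve the system $F_u = F_v = 0$ directly. First I would differentiate in $v$: writing $R = \sqrt{1-u^2-v^2}$, we have $\frac{\partial F}{\partial v} = -\frac{a}{2}\cdot\frac{uv}{R} + (b+c')v$ — being careful here, since the $\frac{c}{2}u^2$ term contributes nothing, so $\frac{\partial F}{\partial v} = v\bigl(b - \frac{a u}{2R}\bigr)$. In $\operatorname{int}(D)$ we have $u>0$ and $R>0$; the bracketed factor, evaluated at a point where it vanishes, would force $a u = 2bR > 0$, hence $b>0$, and one checks this is incompatible with the companion equation $F_u=0$ (see below), so the only way to kill $\frac{\partial F}{\partial v}$ on a genuine critical point is $v=0$. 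Thus every critical point lies on the $u$-axis, $v=0$.

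Next I would set $v=0$ and compute $\frac{\partial F}{\partial u}$. With $v=0$ one has $R = \sqrt{1-u^2}$ and
\[
\frac{\partial F}{\partial u}(u,0) = \frac{a}{2}\sqrt{1-u^2} + \frac{a}{2}u\cdot\frac{-u}{\sqrt{1-u^2}} + (b+c)u
= \frac{a}{2}\cdot\frac{1-2u^2}{\sqrt{1-u^2}} + (b+c)u.
\]
Setting this to zero gives $\frac{a}{2}(1-2u^2) = -(b+c)u\sqrt{1-u^2}$. Squaring and writing $w = u^2 \in (0,1)$ yields a quadratic in $w$, namely $\frac{a^2}{4}(1-2w)^2 = (b+c)^2 w(1-w)$, i.e. $\bigl(a^2+(b+c)^2\bigr)\,(w - \tfrac12)^2 = \tfrac{a^2}{4} - \tfrac{(b+c)^2}{4}\cdot 0$ — after simplification one gets $\left(w-\frac12\right)^2 = \frac{a^2}{4\bigl(a^2+(b+c)^2\bigr)}$, so $w = \frac12\bigl(1 \pm \sqrt{\tfrac{(b+c)^2}{a^2+(b+c)^2}}\bigr)$, which is exactly $u_\pm^2$. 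Both roots lie in $(0,1)$, so both are candidate critical points, but squaring may have introduced a spurious one.

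Finally I would reinstate the sign condition lost in squaring: the equation $\frac{a}{2}(1-2u^2) = -(b+c)u\sqrt{1-u^2}$ requires the sign of $1-2u^2$ to be opposite to that of $b+c$ (recall $a>0$, $u>0$, $\sqrt{1-u^2}>0$). Now $u_+^2 > \frac12$ means $1 - 2u_+^2 < 0$, which is the correct sign precisely when $b+c \ge 0$; and $u_-^2 < \frac12$ means $1-2u_-^2 > 0$, forcing $b+c \le 0$. This gives the stated equivalences (i) and (ii), with the degenerate overlap $b+c=0$ giving $u_\pm^2 = \frac12$ and a single critical point on either reading. The only subtle point — and the one I would be most careful about — is the case analysis around $v\neq 0$: I must rule out that $b - \frac{au}{2R} = 0$ can coexist with $F_u = 0$ at an interior point, which I would do by substituting $\frac{au}{2R} = b$ (equivalently $R = \frac{au}{2b}$, requiring $b>0$) into the $F_u$ equation and deriving a contradiction with $u^2+v^2<1$. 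Everything else is a routine computation.
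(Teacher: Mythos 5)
Your proposal is correct and follows essentially the same route as the paper: compute the two partials, force $v=0$ by ruling out the vanishing of the factor $b-\frac{au}{2R}$, reduce $F_u(u,0)=0$ to a quadratic in $w=u^2$ with roots $u_\pm^2$, and discard the spurious root by comparing the sign of $1-2u^2$ with that of $b+c$. The only (harmless) deviations are that the paper closes the $v\neq0$ sub-case by deriving $(a^2+4bc)\,u=\Delta u=0$ and invoking $\Delta\neq0$, whereas your substitution yields $\frac{a}{2}\sqrt{1-u^2-v^2}+cu=0$, which contradicts the standing normalization $c\geq0$ together with $u>0$ and $u^2+v^2<1$ just as well; and your intermediate display $\left(w-\frac12\right)^2=\frac{a^2}{4(a^2+(b+c)^2)}$ should read $\left(w-\frac12\right)^2=\frac{(b+c)^2}{4(a^2+(b+c)^2)}$, though the roots you then state are the correct ones.
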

\begin{proof} Straightforward calculations yield
\begin{eqnarray*}
\frac{\partial F}{\partial u}&=&\frac{a}{2}\sqrt{1-u^{2}-v^{2}}-a\frac{u^{2}}
{2\sqrt{1-u^{2}-v^{2}}}+(b+c)u;\\
\frac{\partial F}{\partial v}&=&-a\frac{uv}{2\sqrt{1-u^{2}-v^{2}}}+bv=\left(
-a\frac{u}{2\sqrt{1-u^{2}-v^{2}}}+b\right)v.
\end{eqnarray*}

For $(u,v)\in \mathcal{P}$ we affirm that
$-a\displaystyle\frac{u}{2\sqrt{1-u^{2}-v^{2}}}
+b\not=0$. Otherwise from $\displaystyle\frac{\partial F}{\partial u}=0$ we have
\[
\frac{a}{2}\sqrt{1-u^{2}-v^{2}}+cu=0.
\]

Hence we conclude that $(a^{2}+4bc)u=\Delta\cdot u=0$. Since $\Delta \ne 0 $ and $(u,v)\in
int(D)$ we arrive at a contradiction. Therefore, $v=0$ and
\[
\frac{a}{2}\sqrt{1-u^{2}}-\frac{a}{2}\frac{u^{2}}{\sqrt{1-u^{2}}}+(b+c)u=0.
\]
This is equivalent to
\begin{equation}\label{EqPointCritical}
 a(1-2u^{2})=-2(b+c)\sqrt{1-u^{2}}.
\end{equation}

Moreover, the solutions of the equation (\ref{EqPointCritical}) are also solutions
of the equation below
\begin{equation}\label{u2=}
u^{4}-u^{2}+\frac{a^{2}}{4[a^{2}+(b+c)^{2}]}=0.
\end{equation}

The solutions of equation (\ref{u2=}) are $u_{\pm}^{2}=\frac{1}{2}\Big(1\pm
\sqrt{\frac{(b+c)^{2}}{a^{2}+(b+c)^{2}}}\Big)$. Taking into account that $\frac{1}{u_{+}}
\cdot\frac{\partial F}{\partial u}(u_{+},0)=(b+c)-|b+c|$ and $\frac{1}{u_{-}}\cdot
\frac{\partial F}{\partial u}(u_{-},0)=(b+c)+|b+c|$, we conclude
\begin{itemize}
\item $\frac{\partial F}{\partial u}(u_{+},0)=0\Leftrightarrow
b+c\geq0$;
\item $\frac{\partial F}{\partial u}(u_{-},0)=0\Leftrightarrow
b+c\leq0$.
\end{itemize}

\noindent This completes the proof of the lemma.
\end{proof}

In what follows, let us denote by $C_{\alpha}=\{(u,v)\in D:F(u,v)=\alpha\}$ the level
curves of the function $F$ as well as $\alpha_{\pm}:=F(u_{\pm},0)$. The next lemma
enables us to determine the minimum level as well as the maximum level of $F$.

\begin{lemma}\label{MinMax}
Under the previous assumptions the following results hold:
\begin{enumerate}
\item[(i)] If $b+c\leq0$, then $\alpha\in[\frac{b}{2},\alpha_{0}]$ and $F^{-1}(\alpha_{0})
=\{(u_{-},0)\}$;
\item[(ii)] If $b+c\geq0$, then $\alpha\in[\min\{0,\frac{b}{2}\},\alpha_{0}]$ and
$F^{-1}(\alpha_{0})=\{(u_{+},0)\}$.
\end{enumerate}
\end{lemma}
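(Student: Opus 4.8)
The plan is to read the statement as a compactness/optimization problem: $F$ is continuous on the compact set $D$, so its image is the closed interval $[\min_D F,\max_D F]$, and each of these extrema is attained either at a critical point in $\mathrm{int}(D)$ or on $\partial D$. By Lemma~\ref{CriticalPoint} there is a single interior critical point; call its first coordinate $u_*$, so $u_*=u_+$ if $b+c\ge 0$ and $u_*=u_-$ if $b+c\le 0$. I would then (i) evaluate $F$ at $(u_*,0)$, (ii) analyse $F$ on $\partial D$, and (iii) compare the two to identify the maximum and the minimum and to see that the maximum is attained only at $(u_*,0)$.

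For step (i), in either case $u_*^2=\tfrac12\big(1+\tfrac{b+c}{\sqrt{a^2+(b+c)^2}}\big)$ (from Lemma~\ref{CriticalPoint}, since $\sqrt{(b+c)^2/(a^2+(b+c)^2)}=|b+c|/\sqrt{a^2+(b+c)^2}$ and $|b+c|=\pm(b+c)$ in the respective cases), hence $1-u_*^2=\tfrac12\big(1-\tfrac{b+c}{\sqrt{a^2+(b+c)^2}}\big)$ and, since $a>0$ and $u_*>0$, $u_*\sqrt{1-u_*^2}=\sqrt{u_*^2(1-u_*^2)}=\tfrac{a}{2\sqrt{a^2+(b+c)^2}}$. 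Substituting into $F(u_*,0)=\tfrac a2 u_*\sqrt{1-u_*^2}+\tfrac{b+c}{2}u_*^2$ gives $F(u_*,0)=\tfrac{a^2+(b+c)^2}{4\sqrt{a^2+(b+c)^2}}+\tfrac{b+c}{4}=\alpha_0$. For step (ii), $\partial D$ is the union of the diameter $\{u=0,\ |v|\le 1\}$, on which $F=\tfrac b2 v^2$ ranges over the segment between $0$ and $\tfrac b2$, and the semicircle $\{u^2+v^2=1,\ u\ge 0\}$, on which the radical vanishes and $F=\tfrac b2+\tfrac c2 u^2$ ranges over the segment between $\tfrac b2$ and $\tfrac{b+c}{2}$; thus $F(\partial D)=\big[\min\{0,\tfrac b2,\tfrac{b+c}{2}\},\ \max\{0,\tfrac b2,\tfrac{b+c}{2}\}\big]$.

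Step (iii) is where the standing hypotheses $a>0$, $c\ge 0$, $\Delta\ne 0$ come in. From $\sqrt{a^2+(b+c)^2}\ge|b+c|$ I get $\alpha_0>0$ and $\alpha_0\ge\tfrac{b+c}{2}$ unconditionally, and $\alpha_0\ge\tfrac b2$ in all cases: for $b\le 0$ this follows from $\alpha_0\ge 0$, and for $b>0$ one has $\Delta=a^2+4bc>0$ since $c\ge 0$, while $\alpha_0\ge\tfrac b2$ is, after squaring, equivalent to $a^2+4bc\ge 0$ (and trivial when $b\le c$). Hence $\alpha_0\ge\max F(\partial D)$, so $\max_D F=\alpha_0$; since $F$ is non-constant and its unique interior critical point already attains the maximum, $\min_D F$ must be attained on $\partial D$, i.e. $\min_D F=\min\{0,\tfrac b2,\tfrac{b+c}{2}\}$, which by $c\ge 0$ reduces to $\min\{0,\tfrac b2\}$ when $b+c\ge 0$ and to $\tfrac b2$ when $b+c\le 0$ (in the latter case $c\ge 0$ forces $b<0$, whence $\tfrac b2\le\tfrac{b+c}{2}\le 0$). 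Finally, $F^{-1}(\alpha_0)=\{(u_*,0)\}$ follows because $\alpha_0$ is not attained on $\partial D$: $\alpha_0>0$ rules out the boundary value $0$, and $\alpha_0=\tfrac b2$ with $b>0$ would force $\Delta=0$; so the bound $\alpha_0\ge\max F(\partial D)$ is in fact strict and $\alpha_0$ is realized only at the interior critical point.

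I expect step (iii) to be the main obstacle — not any single inequality, but keeping the sub-cases on the signs of $b$ and $b+c$ organized and pinpointing exactly where $c\ge 0$ and $\Delta\ne 0$ force the strict bound on $\alpha_0$. The rest is the standard extremum-on-a-compact-domain argument together with the algebraic identification of the interior critical value with $\alpha_0$.
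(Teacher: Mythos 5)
Your proof is correct and follows essentially the same route as the paper: restrict $F$ to the two boundary pieces $\{u=0\}$ and $D\cap\Bbb{S}^{1}$, use the uniqueness of the interior critical point from Lemma~\ref{CriticalPoint}, and compare the boundary extrema with the critical value $\alpha_{0}$ (the paper organizes the comparison by explicit sign cases on $b$ and $b+c$ rather than through the unified inequality $\alpha_{0}>\frac{b+c}{2}\geq\frac{b}{2}$, but the substance is identical). Your explicit verification that $F(u_{\pm},0)=\alpha_{0}$ is a welcome detail the paper only asserts.
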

\begin{proof}We start analyzing the function $F$ on the sets $X=D\cap\{u=0\}$ and
$Y=D\cap\Bbb{S}^{1}$. On the former case we have $F(u,v)=\frac{b}{2}v^{2}$ while
on the later one $F(u,v)=\frac{b}{2} +\frac{c}{2}u^{2}.$ Now, if $b+c\leq0$ we get
$b<0\leq c$, then $\displaystyle\min_{\partial D}F=\frac{b}{2}$ and
$\displaystyle\max_{\partial D}F=0<\alpha_{-}=\alpha_{0}$. Therefore
$\displaystyle\min_{D}F=\frac{b}{2}$,  $\displaystyle\max_{D}F=\alpha_{0}$ and
$F^{-1}(\alpha_{0})=\{(u_{-},0)\}$ because $(u_{-},0)$ is the only critical point of
$F$ in $int(D)$. Now, if $b+c\geq0$ Lemma \ref{CriticalPoint} yields that $(u_{+},0)$
is the only critical point of $F$ in $int(D)$. Thereby, we have two possibilities to consider:
\begin{itemize}
\item $b<0\leq c$. In this case, since $\displaystyle\min_{\partial D}F=\frac{b}{2}$,
$\displaystyle\max_{\partial D}F=\frac{b+c}{2}<\alpha_{+}=\alpha_{0},$ we get
$\displaystyle\min_{D}F=\frac{b}{2}$,  $\displaystyle\max_{D}F
=\alpha_{0}$ and $F^{-1}(\alpha_{0})=\{(u_{+},0)\}$.

\item $b>0$ and $c\geq0$. It is easy to see that, $\displaystyle\min_{\partial D}F=0$
and $\displaystyle\max_{\partial D}F=\frac{b+c}{2}<\alpha_{+}=\alpha_{0}$.
Therefore $\displaystyle\min_{D}F=0$,  $\displaystyle\max_{D}F=\alpha_{0}$ and
$F^{-1}(\alpha_{0})=\{(u_{+},0)\}$.
\end{itemize}

\noindent Thus we conclude the proof of the lemma.
\end{proof}

\begin{lemma}\label{C}
The partial derivative $\displaystyle\frac{\partial F}{\partial u}$ vanishes on the set
\begin{equation}
\Gamma =\{(u,v) \in int(D): 1-u^{2}-v^{2}=\frac{\tau^{2}}{a^{2}} u^{2}\},
\end{equation}where
$\tau=\sqrt{a^{2}+(b+c)^{2}}-(b+c).$
\end{lemma}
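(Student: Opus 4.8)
The plan is to start from the explicit expression for $\frac{\partial F}{\partial u}$ already computed in the proof of Lemma \ref{CriticalPoint}, namely
\[
\frac{\partial F}{\partial u}(u,v)=\frac{a}{2}\sqrt{1-u^{2}-v^{2}}-\frac{a}{2}\,\frac{u^{2}}{\sqrt{1-u^{2}-v^{2}}}+(b+c)u,
\]
and to solve the equation $\frac{\partial F}{\partial u}=0$ on $int(D)$. Setting $w:=\sqrt{1-u^{2}-v^{2}}$, which is strictly positive on $int(D)$ because there $u^{2}+v^{2}<1$, the vanishing of $\frac{\partial F}{\partial u}$ is equivalent, after multiplying the equation by the nonzero factor $2w$, to the polynomial relation $a\,w^{2}+2(b+c)u\,w-a\,u^{2}=0$. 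Thus the whole argument reduces to analyzing this quadratic relation between $w$ and $u$.

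Next I would view $a\,w^{2}+2(b+c)u\,w-a\,u^{2}=0$ as a quadratic equation in the unknown $w$, with $u>0$ fixed. Its discriminant equals $4\big[(b+c)^{2}+a^{2}\big]u^{2}>0$, and the product of its two roots is $-u^{2}<0$; hence there is exactly one nonnegative root, which must therefore be the value of $w$. Solving, this root is
\[
w=\frac{-(b+c)+\sqrt{a^{2}+(b+c)^{2}}}{a}\,u=\frac{\tau}{a}\,u,
\]
where $\tau=\sqrt{a^{2}+(b+c)^{2}}-(b+c)>0$, the positivity of $\tau$ following from $a\neq0$ since $\sqrt{a^{2}+(b+c)^{2}}>|b+c|\geq b+c$. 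Squaring and recalling $w^{2}=1-u^{2}-v^{2}$ then gives precisely $1-u^{2}-v^{2}=\frac{\tau^{2}}{a^{2}}u^{2}$, that is, $(u,v)\in\Gamma$.

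For the converse I would reverse the steps. If $(u,v)\in int(D)$ satisfies $1-u^{2}-v^{2}=\frac{\tau^{2}}{a^{2}}u^{2}$, then $w=\frac{\tau}{a}u$ (the positive square root is legitimate since $\tau,u,a>0$), and substituting this into the displayed expression for $\frac{\partial F}{\partial u}$, using the identity $\tau^{2}+2(b+c)\tau-a^{2}=0$ satisfied by $\tau$, yields $\frac{\partial F}{\partial u}(u,v)=0$. In particular this shows that $\Gamma$ is exactly the zero set of $\frac{\partial F}{\partial u}$ inside $int(D)$, which contains the assertion of the lemma.

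There is no serious obstacle here: the computation is elementary once $\frac{\partial F}{\partial u}$ is written out. The only points that require a little care are the remark that $w>0$ on $int(D)$, so that multiplying by $2w$ is a genuine equivalence and the squaring step is reversible; the correct selection of the unique nonnegative root of the quadratic in $w$; and the observation that $\tau>0$, which is what makes the relation $w=\tau u/a$ compatible with the constraints $w\geq0$ and $u\geq0$.
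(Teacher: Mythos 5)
Your proof is correct and follows essentially the same route as the paper's: both start from the explicit formula for $\frac{\partial F}{\partial u}$, reduce its vanishing to the quadratic relation $aw^{2}+2(b+c)uw-au^{2}=0$ (the paper phrases it as a quadratic in $t=w/u$, you as a quadratic in $w$), and select the unique nonnegative root $w=\tau u/a$. Your extra care about the reversibility of the squaring step and the sign analysis of the roots is a slight tightening of the same argument, not a different one.
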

\begin{proof}From the expression of the partial derivatives found in the proof the Lemma
\ref{CriticalPoint} we deduce that $\displaystyle\frac{\partial F}{\partial u}=0$ if, and
only if,
\begin{equation}\label{EqFu=0}
\frac{a}{2}\sqrt{1-u^{2}-v^{2}}-a\frac{u^{2}}
{2\sqrt{1-u^{2}-v^{2}}}+(b+c)u=0.
\end{equation}

We can suppose that $u\not=0$, since $(u,v)\in int(D)$. Then $(u,v)$ satisfies the
relation (\ref{EqFu=0}) if, and only if,
\begin{equation}\label{eqn2dg}
at^{2}+2(b+c)t-a=0,
\end{equation}where $t=\displaystyle\frac{\sqrt{1-u^{2}-v^{2}}}{u}$. Since its roots are
$t_{\pm}=\frac{-(b+c)\pm\sqrt{a^{2}+(b+c)^{2}}}{a}$ and $t_{-}<0$ we deduce
$t_{+}=\displaystyle\frac{\sqrt{1-u^{2}-v^{2}}}{u}=\displaystyle\frac{\tau}{a}$
which is equivalent to $(u,v)\in\Gamma$.
\end{proof}

Geometrically, the points of the curve $\Gamma$ are the points where the level
curves have tangent vector parallel to the axis $u$.

\begin{remark}\label{uC}
Analyzing the cases $b+c\leq0$ and $b+c\geq0$ we conclude that: $b+c\leq0\Rightarrow
(u_{-},0)\in\Gamma$ whereas  $b+c\geq0\Rightarrow(u_{+},0)\in\Gamma$.
\end{remark}

\begin{lemma}\label{CinterC}
Under the previous notations the items below are valid.
\begin{enumerate}
\item[(i)] $C_{\alpha}\cap\Gamma\not=\varnothing\Leftrightarrow\frac{b}{2}<\alpha\leq\alpha_{0}$.
Moreover, if $\alpha\in(\frac{b}{2},\alpha_{0})$ then  $C_{\alpha}\cap\Gamma$
has only two elements;
\item[(ii)] $(u,v)\in C_{\alpha}\cap \{u=0\}\Leftrightarrow b\cdot v^{2}=2\alpha$;
\item[(iii)] $(u,v)\in C_{\alpha}\cap\;\Bbb{S}^{1} \Leftrightarrow c\cdot
u^{2}=2\alpha-b$.
\end{enumerate}
\end{lemma}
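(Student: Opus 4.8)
The plan is to settle (ii) and (iii) by direct substitution into $F$, and to reduce (i) to a one-variable monotonicity argument along $\Gamma$. For (ii): on $\{u=0\}\cap D$ we have $F(0,v)=\tfrac{b}{2}v^{2}$, so $(0,v)\in C_{\alpha}$ iff $bv^{2}=2\alpha$. For (iii): on $\Bbb{S}^{1}$ we have $1-u^{2}-v^{2}=0$ and $u^{2}+v^{2}=1$, whence $F(u,v)=\tfrac{b}{2}+\tfrac{c}{2}u^{2}$, so $(u,v)\in C_{\alpha}\cap\Bbb{S}^{1}$ iff $cu^{2}=2\alpha-b$. The work is in (i). The first step is to restrict $F$ to $\Gamma$ and eliminate $v$: by Lemma \ref{C}, along $\Gamma$ one has $\sqrt{1-u^{2}-v^{2}}=\tfrac{\tau}{a}u$, hence $u^{2}+v^{2}=1-\tfrac{\tau^{2}}{a^{2}}u^{2}$, and substituting into the definition of $F$ gives
\[
F|_{\Gamma}=\frac{b}{2}+\frac{\lambda}{2}\,u^{2},\qquad \lambda=\tau+c-\frac{b\tau^{2}}{a^{2}},
\]
a quadratic in $u$ whose constant term is exactly $\tfrac{b}{2}$.

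Next I would describe how $u$ sweeps out $\Gamma$. The requirement $v^{2}\ge0$ confines $u$ to $(0,u_{*}]$, the endpoint $u_{*}$ being the value at which $v=0$; by Remark \ref{uC} the corresponding point of $\Gamma$ is $(u_{+},0)$ when $b+c\ge0$ and $(u_{-},0)$ when $b+c\le0$, and in both cases $F|_{\Gamma}(u_{*})=F(u_{\pm},0)=\alpha_{0}$ by Lemma \ref{MinMax}. Combining this with the relation $\tfrac{\lambda}{2}u_{*}^{2}=\alpha_{0}-\tfrac{b}{2}$ and the inequality $\tfrac{b}{2}<\alpha_{0}$ — which I would read off from $\alpha_{0}=\tfrac14\big(\sqrt{a^{2}+(b+c)^{2}}+b+c\big)$ by a short case split on the sign of $b-c$, using $\Delta=a^{2}+4bc\ne0$, $a>0$, $c\ge0$ — forces $\lambda>0$.

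With $\lambda>0$ the map $u\mapsto F|_{\Gamma}(u)$ is strictly increasing on $(0,u_{*}]$, with $F|_{\Gamma}\to\tfrac{b}{2}$ as $u\to0^{+}$ (a value not attained on $\Gamma$, since $u=0\notin\Gamma$) and $F|_{\Gamma}(u_{*})=\alpha_{0}$. Hence $F|_{\Gamma}$ attains every value of $(\tfrac{b}{2},\alpha_{0}]$, each at a single $u_{\alpha}$, and no other value; this is precisely the asserted equivalence $C_{\alpha}\cap\Gamma\ne\varnothing\Leftrightarrow\tfrac{b}{2}<\alpha\le\alpha_{0}$. For $\alpha$ in this range the fibre of $\Gamma$ over $u_{\alpha}$ consists of the two points $(u_{\alpha},\pm v_{\alpha})$, $v_{\alpha}=\sqrt{1-(1+\tau^{2}/a^{2})u_{\alpha}^{2}}$, and these coincide exactly when $v_{\alpha}=0$, i.e. $u_{\alpha}=u_{*}$, i.e. $\alpha=\alpha_{0}$; hence $C_{\alpha}\cap\Gamma$ has exactly two elements for $\alpha\in(\tfrac{b}{2},\alpha_{0})$ and one for $\alpha=\alpha_{0}$. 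The only genuinely delicate point is the sign of $\lambda$, equivalently $\tfrac{b}{2}<\alpha_{0}$; the rest is substitution together with the already-proved Lemmas \ref{C} and \ref{MinMax} and Remark \ref{uC}.
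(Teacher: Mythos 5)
Your proposal is correct, and its core is the same as the paper's: items (ii) and (iii) by direct substitution, and item (i) by restricting $F$ to $\Gamma$ via $\sqrt{1-u^{2}-v^{2}}=\tfrac{\tau}{a}u$, which yields the same relation $\lambda u^{2}=2\alpha-b$ with $\lambda=\tau-\tfrac{b}{a^{2}}\tau^{2}+c$ (the paper's equation for $C_\alpha\cap\Gamma$). The two arguments diverge only in how the key positivity $\lambda>0$ is obtained and in how much of the statement is actually verified. The paper proves $\lambda>0$ directly from the algebraic identity $\tau\big(\sqrt{a^{2}+(b+c)^{2}}+(b+c)\big)=a^{2}$ (giving $b\tau<a^{2}$, hence $\tau>\tfrac{b}{a^{2}}\tau^{2}$, and then $c\ge0$ finishes it), and then simply asserts that nonemptiness is equivalent to $2\alpha>b$; it does not explicitly check the upper bound $\alpha\le\alpha_{0}$ or the count of intersection points. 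You instead deduce $\lambda>0$ indirectly, from the endpoint evaluation $F|_{\Gamma}(u_{*})=F(u_{\pm},0)=\alpha_{0}$ together with the inequality $\tfrac{b}{2}<\alpha_{0}$ (which, as you note, reduces when $b>c$ to $a^{2}+4bc>0$, automatic here since $b>c\ge0$), and you then get the full statement — the range $(\tfrac{b}{2},\alpha_{0}]$, the exactly-two-points claim for $\alpha<\alpha_{0}$, and the degeneration to one point at $\alpha=\alpha_{0}$ — from the monotone parametrization of $F|_{\Gamma}$ over $u\in(0,u_{*}]$. Your route costs an appeal to Lemma \ref{MinMax} and Remark \ref{uC} but buys a genuinely complete proof of item (i), including the parts the paper leaves implicit; the paper's direct computation of the sign of $\lambda$ is slightly more self-contained and could be grafted onto your argument if you prefer not to invoke $\alpha_{0}>\tfrac{b}{2}$ separately.
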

\begin{proof}
By Lemma \ref{C} it follows that, $(u,v)\in C_{\alpha}\cap\Gamma$ if, and only if,
\begin{equation*}
\frac{a}{2}u\sqrt{\frac{\tau^{2}}{a^{2}}u^{2}}
+\frac{b}{2}\left(1-\frac{\tau^{2}}{a^{2}}\right)u^{2}
+\frac{c}{2}u^{2}=\alpha\Leftrightarrow
\end{equation*}

\begin{equation}
\label{eqn1}
\left(\tau-\frac{b}{a^{2}}\tau^{2}
+c\right)u^{2}=2\alpha-b.
\end{equation}
Since $\tau = \frac{a^2}{\sqrt{a^{2}+(b+c)^{2}}+(b+c)}$ we deduce $\tau>\frac{b}{a^{2}}
\tau^{2}$ otherwise
\[
b\tau<\tau\big(\sqrt{a^{2}+(b+c)^{2}}+(b+c)\big)=a^2\le b\tau.
\]

Then $(u,v)\in C_{\alpha}\cap\Gamma$ if, and only if, $2\alpha>b$ which yields the first
item. While the second one is an immediate consequence of the equality $F(0,v)=\alpha$.
Now observe that $(u,v)\in C_{\alpha}\cap\;\Bbb{S}^{1}$ if, and only if,
$u^{2}+v^{2}=1$
and $F(u,v)=\alpha$. Using the function $F$ we conclude the item (iii).
\end{proof}

\section{Main result}

Next we characterize the level curves of the function $F$.

\begin{proposition}\label{LevelCurves}{\rm (Level Curves)} The level curves $C_{\alpha}$ of the
function $F$ satisfy:
\begin{enumerate}
\item If $\alpha\in\left(\min\{0,\frac{b}{2}\},\max\{0,\frac{b}{2}\}\right)$, then $C_{\alpha}$
intersects $\{(0,v):-1<v<1\}$ at two different points. Moreover, $C_{\frac{b}{2}}\cap\{u=0\}
=\{(0,\pm1)\}$, $C_{0}\cap\{u=0\}=\{(0,0)\}$, $b>0$ implies $C_{0}=\{(0,0)\}$ and $b<0$
implies $C_{\frac{b}{2}}=\{(0,\pm1)\}$;

\item If $\left(\frac{b}{2},\frac{b+c}{2}\right)$, then the level curve $C_{\alpha}$
intersects $\Bbb{S}^{1}_{+}=\{(u,v):u^{2}+v^{2}=1 \ {\rm and} \ u\geq0\}
\smallsetminus \{(0,\pm1)\}$
at two different points. Moreover, $c=0$ implies
$C_{\frac{b}{2}}=\Bbb{S}^{1}_{+}$ and $c\ne0$
implies $C_{\frac{b}{2}}\cap\;\Bbb{S}^{1}_{+}=\{(0,\pm1)\}$ and
$C_{\frac{b+c}{2}}\cap\;\Bbb{S}^{1}_{+}
=\{(1,0)\}$;

\item For any $\alpha\in\left(\max\{0,\frac{b+c}{2}\},\alpha_{0}\right)$, we get
$C_{\alpha}\cap\{u=0\}=\varnothing$ and $C_{\alpha}\cap\;\Bbb{S}^{1}_{+}
=\varnothing$;

\item If $|b+c|=\pm(b+c)$, then $C_{\alpha_{0}}=\{(u_{\pm},0)\}$.

\end{enumerate}
\end{proposition}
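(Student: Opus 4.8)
The plan is to analyze the level curves $C_\alpha$ of $F$ by combining the critical‑point data from Lemma \ref{CriticalPoint}, the extremal‑value data from Lemma \ref{MinMax}, and the intersection formulas from Lemma \ref{CinterC}. For item (1) I would restrict $F$ to the segment $\{u=0\}$, where $F(0,v)=\tfrac{b}{2}v^2$; part (ii) of Lemma \ref{CinterC} gives $bv^2=2\alpha$, so for $\alpha$ strictly between $0$ and $b/2$ the equation $v^2 = 2\alpha/b$ has exactly two solutions $v=\pm\sqrt{2\alpha/b}$ in $(-1,1)$, yielding the two intersection points. The boundary cases $\alpha = b/2$ and $\alpha=0$ then force $v=\pm1$ and $v=0$ respectively. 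Finally, if $b>0$ then $F\ge 0$ on all of $D$ with equality forcing $u=0$ and $v=0$ (using $1-u^2-v^2\ge 0$), so $C_0=\{(0,0)\}$; symmetrically if $b<0$ then $\alpha = b/2 = \min_D F$ by Lemma \ref{MinMax}, and since the minimum is attained only on $\partial D$ at $u=0$, $v=\pm1$, we get $C_{b/2}=\{(0,\pm1)\}$.

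For item (2), the argument is parallel but on the arc $\Bbb{S}^1_+$, where $F(u,v) = \tfrac{b}{2} + \tfrac{c}{2}u^2$. Part (iii) of Lemma \ref{CinterC} gives $cu^2 = 2\alpha - b$; when $c\ne 0$ and $\alpha\in(b/2,(b+c)/2)$ this forces $u^2 = (2\alpha-b)/c \in (0,1)$, hence two points $(\pm u, v)$... more carefully, $u=\sqrt{(2\alpha-b)/c}>0$ and $v=\pm\sqrt{1-u^2}$ gives two points on $\Bbb{S}^1_+\smallsetminus\{(0,\pm1)\}$. The degenerate endpoint computations ($\alpha=b/2\Rightarrow u=0$, $\alpha=(b+c)/2\Rightarrow u=1$) and the case $c=0$ (where $F\equiv b/2$ on $\Bbb{S}^1$, so the whole arc is a level set) follow directly. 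One must also check $C_\alpha$ meets $\{u=0\}$ only at those points and nowhere else on that portion of the range, which again comes from $bv^2=2\alpha$ having no solution in $(-1,1)$ once $\alpha$ exceeds the relevant bound.

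For item (3), I want to show that for $\alpha\in(\max\{0,(b+c)/2\},\alpha_0)$ the level curve stays in $\mathrm{int}(D)$ and avoids both $\{u=0\}$ and $\Bbb{S}^1_+$. On $\{u=0\}$ we need $bv^2=2\alpha$ to have no solution with $|v|<1$: if $b\le 0$ the left side is $\le 0 < 2\alpha$; if $b>0$ then $bv^2 < b \le b+c < 2\alpha$ since $\alpha > (b+c)/2$ — wait, this needs $c\ge 0$, which holds in the relevant subcase — so no solution. On $\Bbb{S}^1_+$ we need $cu^2 = 2\alpha - b$ to have no solution with $0\le u\le 1$; since $2\alpha - b > b+c - b = c$ (using $\alpha > (b+c)/2$) and also $2\alpha - b > -b \ge$ the infimum, a short case split on the sign of $c$ rules it out. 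Item (4) is just Lemma \ref{MinMax}: $C_{\alpha_0} = F^{-1}(\alpha_0)$ is the single critical point $(u_\pm,0)$ according to the sign of $b+c$, which is exactly the condition $|b+c| = \pm(b+c)$.

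The main obstacle I expect is not any single deep step but rather the careful bookkeeping across the several sign regimes for $b$ and $c$ (recall $b+c\le 0 \Rightarrow b<0\le c$, while $b+c\ge 0$ splits into $b<0\le c$ and $b>0,c\ge 0$), making sure that in each regime the ranges $(\min\{0,b/2\},\max\{0,b/2\})$, $(b/2,(b+c)/2)$, and $(\max\{0,(b+c)/2\},\alpha_0)$ are correctly ordered, possibly empty, and that the intersection counts (two points, or a degenerate endpoint) are stated only where the interval is nonempty. I would organize the proof as four numbered blocks mirroring the statement, and within each, dispose of the generic $\alpha$ first via Lemma \ref{CinterC}, then handle the boundary values, then the special $c=0$ or $b\gtrless 0$ situations using Lemma \ref{MinMax} and the defining inequality $1-u^2-v^2\ge 0$ on $D$.
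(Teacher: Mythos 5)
Your proposal is correct and follows the same route as the paper, which simply derives items (1)--(3) from parts (ii)--(iii) of Lemma \ref{CinterC} (i.e.\ the equations $bv^{2}=2\alpha$ on $\{u=0\}$ and $cu^{2}=2\alpha-b$ on $\Bbb{S}^{1}$) and item (4) from Lemma \ref{MinMax}; you merely carry out explicitly the sign bookkeeping that the paper leaves implicit. The only caveat is inherited from the statement itself rather than from your argument: when $b<0$ and $c=0$ the claims ``$C_{b/2}=\{(0,\pm1)\}$'' of item (1) and ``$C_{b/2}=\Bbb{S}^{1}_{+}$'' of item (2) must be read as applying in complementary cases, exactly as your treatment of the minimum on $\partial D$ indicates.
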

\begin{proof} We note that items $1,2$ and $3$ are a direct consequence of item $2$
and $3$ of Lemma \ref{CinterC}. The item (4) follows directly from Lemma \ref{MinMax},
which completes the proof of the proposition.
\end{proof}

\begin{corollary}\label{corol1}
Under the previous assumptions the following results hold:
\begin{enumerate}
 \item[(1)] If $\alpha\in\left(\min\{0,\frac{b}{2}\},\max\{0,\frac{b}{2}\}\right)
\cup\left(\frac{b}{2},\frac{b+c}{2}\right)$, then the level curve
$C_{\alpha}$ is not complete;

\item[(2)] If $\alpha\in(\max\{0,\frac{b+c}{2}\},\alpha_{0})$, then $C_{\alpha}$
is a smooth, simple closed curve.
\end{enumerate}
\end{corollary}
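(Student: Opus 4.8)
The two items will be read off from Proposition~\ref{LevelCurves} together with the qualitative information about $F$ collected in the previous section.

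For item (1), the plan is to show that in each of the two ranges the level curve $C_{\alpha}$ necessarily meets $\partial D$ at a point different from $(0,\pm1)$, and that such a point cannot appear as a limit value of a complete solution. If $\alpha\in\big(\min\{0,\tfrac b2\},\max\{0,\tfrac b2\}\big)$, Proposition~\ref{LevelCurves}(1) puts a point $(0,v_{0})$ with $|v_{0}|<1$ on $C_{\alpha}$; if $\alpha\in\big(\tfrac b2,\tfrac{b+c}{2}\big)$, Proposition~\ref{LevelCurves}(2) puts a point $(u_{0},v_{0})\in\mathbb S^{1}_{+}\smallsetminus\{(0,\pm1)\}$ on $C_{\alpha}$, and by the symmetry $F(u,v)=F(u,-v)$ of the level curves one has $v_{0}\ne0$ in both situations. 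Now the maximal solution $x$ attached to $C_{\alpha}$ lives only where $(x,\dot x)\in\mathrm{int}(D)$, since the differential equation (\ref{EqRLW}) degenerates on $\partial D$; because near such a boundary point $\dot x\to v_{0}\ne0$, the change of variable $s\mapsto x$ shows that $(x,\dot x)$ reaches it in finite arc length. Hence $x$ is not defined on all of $\mathbb R$ and $(x,\dot x)$ has a limit value outside $\{(0,\pm1)\}$, so $C_{\alpha}$ is not complete. Geometrically the profile curve either meets the axis of rotation non-orthogonally, or reaches the set $1-x^{2}-\dot x^{2}=0$ with $x>0$, where $k_{1}=0$ while the expression for $k_{2}$ blows up. The one point to treat carefully is that $C_{\alpha}$ carries no separate closed component in these ranges: such a component, being at a regular value and contained in $\mathrm{int}(D)$, would contribute at least two points to $C_{\alpha}\cap\Gamma$, which is incompatible with Lemma~\ref{CinterC}(i) when $\alpha\le\tfrac b2$ and, when $\alpha>\tfrac b2$, already accounted for by the $v$-extrema of the boundary-touching arc.

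For item (2), fix $\alpha\in\big(\max\{0,\tfrac{b+c}{2}\},\alpha_{0}\big)$. First, $C_{\alpha}\ne\varnothing$: by Lemma~\ref{MinMax} one has $\min_{D}F=\min\{0,\tfrac b2\}<\alpha<\alpha_{0}=\max_{D}F$, so $\alpha$ is attained since $D$ is connected. Second, by Proposition~\ref{LevelCurves}(3) the curve $C_{\alpha}$ misses both $\{u=0\}$ and $\mathbb S^{1}_{+}$, hence $C_{\alpha}\subset\mathrm{int}(D)$, and being closed in the compact set $D$ it is compact. Third, $\alpha$ is a regular value of $F$ on $\mathrm{int}(D)$: by Lemma~\ref{CriticalPoint} the only critical point of $F$ in $\mathrm{int}(D)$ is $(u_{\pm},0)$, with value $\alpha_{0}\ne\alpha$. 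Therefore $C_{\alpha}$ is a compact one-dimensional embedded submanifold of $\mathrm{int}(D)$ without boundary, i.e.\ a finite disjoint union of embedded, in particular simple, closed curves. To see that this union is connected, note that on each component the coordinate $v$ attains a strict maximum and a strict minimum at two distinct points; at such a point the tangent line of $C_{\alpha}$ is horizontal, so $\partial F/\partial u$ vanishes there, and since $\partial F/\partial u(0,v)=\tfrac a2\sqrt{1-v^{2}}>0$ these points have $u\ne0$, hence lie on $\Gamma$ by Lemma~\ref{C}. Thus each component contributes at least two points to $C_{\alpha}\cap\Gamma$; but $\big(\max\{0,\tfrac{b+c}{2}\},\alpha_{0}\big)\subset\big(\tfrac b2,\alpha_{0}\big)$, so Lemma~\ref{CinterC}(i) gives $\#(C_{\alpha}\cap\Gamma)=2$, forcing a single component. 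Hence $C_{\alpha}$ is a smooth, simple closed curve.

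I expect the main obstacle to be the connectedness step in item (2) — ruling out extra components of the level set. The count of $C_{\alpha}\cap\Gamma$ handles it cleanly, but one must be careful that the $v$-extrema of a component are regular points of $F$ (true, since $\alpha$ is a regular value) so that ``horizontal tangent'' really does force $\partial F/\partial u=0$; an equivalent route is a Morse-type deformation, starting from a small circle around the unique maximum $(u_{\pm},0)$ and observing that no critical value of $F$ is met as $\alpha$ decreases until $C_{\alpha}$ first touches $\partial D$, which by Proposition~\ref{LevelCurves} occurs exactly at $\alpha=\max\{0,\tfrac{b+c}{2}\}$.
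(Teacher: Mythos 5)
Your proof is correct, and it reaches the same conclusions by the same overall strategy the paper uses — reading the boundary behaviour of $C_{\alpha}$ off Proposition~\ref{LevelCurves} — but it supplies genuine arguments for two points that the paper's proof (which is essentially a two-line citation of that proposition) leaves completely implicit. First, for item (1) you explain \emph{why} a boundary intersection at a point other than $(0,\pm1)$ kills completeness: the solution reaches $\partial D$ in finite arc length because $\dot x$ tends to a nonzero limit there, and you take care to rule out a stray closed component of $C_{\alpha}$ in $\mathrm{int}(D)$ on which a complete solution could still live — a possibility the paper never addresses, though it is needed for the ``there are no complete RLWS'' reading of Theorem~\ref{RLWsurfaces}(2). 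Second, and more substantially, for item (2) the paper asserts connectedness of $C_{\alpha}$ with no argument, whereas you derive it from the count $\#(C_{\alpha}\cap\Gamma)=2$ in Lemma~\ref{CinterC}(i) together with the observation that every closed component in $\mathrm{int}(D)$ must contribute two $v$-extrema lying on $\Gamma$ (using $\partial F/\partial u>0$ on $\{u=0\}\cap\mathrm{int}(D)$ to exclude $v$-extrema off $\Gamma$). This puts Lemma~\ref{CinterC}(i) to work in a way the paper's own proof of the corollary does not, and it is exactly the right tool. The only soft spot is the clause in item (1) asserting that, when $\alpha>\tfrac b2$, the two points of $C_{\alpha}\cap\Gamma$ are ``already accounted for by the $v$-extrema of the boundary-touching arc'': this is true (the tangent direction at the endpoints $(0,\pm v_{0})$, respectively on $\Bbb{S}^{1}$, forces $|v|$ to increase into $\mathrm{int}(D)$, so each arc carries interior $v$-extrema on $\Gamma$), but you should spell that computation out rather than assert it. With that one sentence expanded, your write-up is a complete proof — more complete, in fact, than the one in the paper.
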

\begin{proof}
If $\alpha\in\left(\min\{0,\frac{b}{2}\},\max\{0,\frac{b}{2}\}\right)
\cup\left(\frac{b}{2},\frac{b+c}{2}\right)$, we get by Proposition
\ref{LevelCurves} that the level curve $C_{\alpha}$ is not defined for all
$s\in\Bbb{R}$. Therefore, $C_{\alpha}$ is not complete. This proof the item (1).
Follows directly from Proposition \ref{LevelCurves} [item (3)] that $C_{\alpha}$
is a smooth, simple closed curve.
\end{proof}

\begin{proof}[\textbf{Proof of the Theorem \ref{RLWsurfaces}}] We follow the numbering
is accordance with the statements of the theorem.
\begin{enumerate}
\item[1.] Follows directly from Lemma $4$ that $\alpha\in[\min\{0,\frac{b}{2}\},\alpha_{0}]$;
\item[2.] If the function $x$ satisfies $F(x,\dot{x})=\alpha $ and $\alpha\in\left(
\min\{0,\frac{b}{2}\},\max\{0,\frac{b}{2}\}\right)\cup\left(\frac{b}{2},\frac{
b+c}{2}\right)$, we get by Corollary \ref{corol1} that $x$ is not defined for
all $s\in\Bbb{R}$. Therefore, the RLWS associated is not complete;
\item[3.] Next we note that item $2$ of Corollary \ref{corol1} yield:
if $F(x,\dot{x})=\alpha$ and $\alpha\in(\max\{0,\frac{b+c}{2}\},\alpha_{0})$ then $x$
is defined for all $s\in\Bbb{R}$. Thereby, the RLWS associated is complete;
\item[4.] If $x$ is such that $F(x,\dot{x})=\alpha_{0}$, then $\dot{x}=0$ and $x=u_{\pm}$.
Therefore, the RLWS associated is a Clifford torus,
\end{enumerate}which completes the proof of the desired theorem.
\end{proof}

\newpage
\begin{center}
\begin{figure}
\includegraphics[scale=.35]{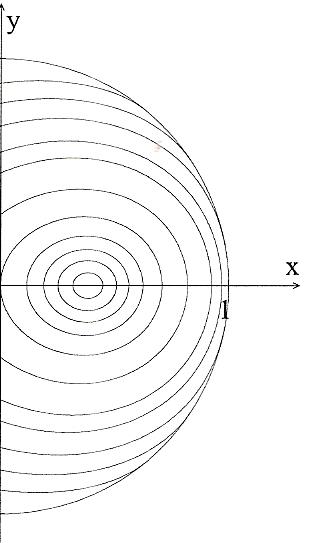}
\hspace{.4cm}
\includegraphics[scale=.35]{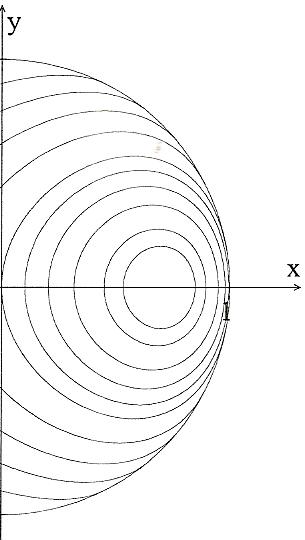}
\caption{$b+c<0$ and $b+c=0$, respectively.}
\end{figure}
\end{center}
\begin{center}
\begin{figure}
\includegraphics[scale=.35]{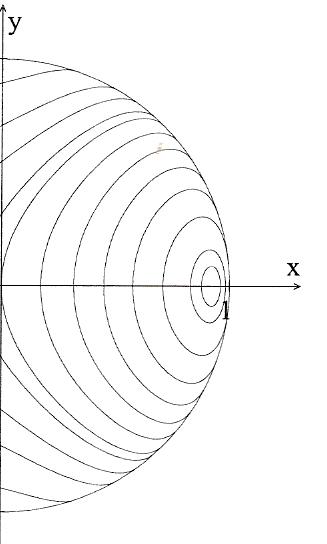}
\hspace{.4cm}
\includegraphics[scale=.35]{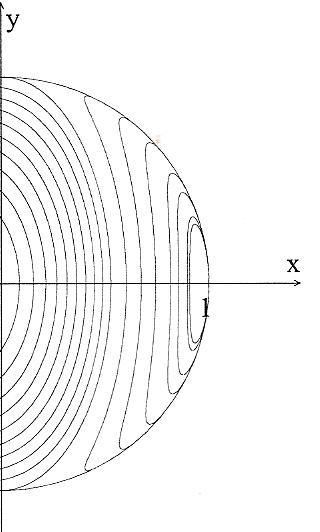}
\caption{$b+c>0$: $b<0$ and $b>0$, respectively.}
\end{figure}
\end{center}
\begin{center}
\begin{figure}
\includegraphics[scale=.35]{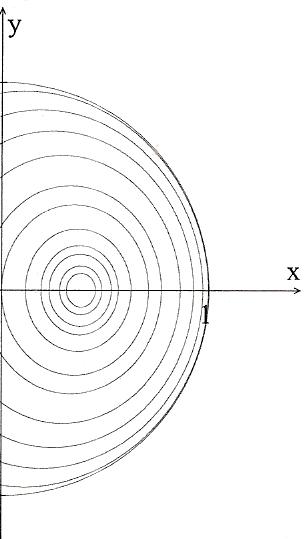}
\hspace{.4cm}
\includegraphics[scale=.35]{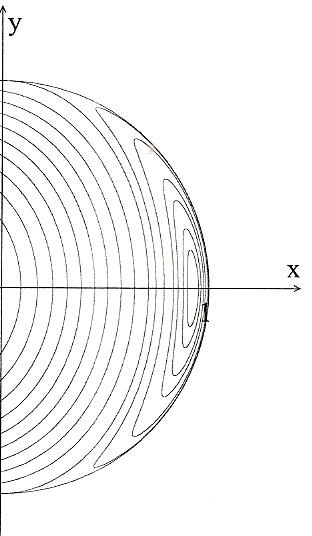}
\caption{$c=0$: $b<0$ and $b>0$, respectively.}
\end{figure}
\end{center}

In order to prove Theorem \ref{family} we shall need the following lemma.

\begin{lemma}\label{FamilyCurves}
Let $x$ be the solution of equation (\ref{eq2}) such that $x(s)\not=0$ and
$\dot{x}(s)\not=0$, $\forall\; s \in \Bbb{R}$. If $c=0$ and $k_{1}$ is constant, then
$\alpha=\frac{b}{2}$.
\end{lemma}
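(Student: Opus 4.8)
The plan is to use the constancy of $k_{1}$ to remove the radical from (\ref{eq2}) and reduce the statement to a polynomial identity in $x^{2}$. By Lemma \ref{PC} we have $k_{1}=-\sqrt{1-x^{2}-\dot{x}^{2}}/x$, and since $x(s)>0$ while the radical is nonnegative, $k_{1}$ constant forces $k_{1}\le 0$ together with
\[
\sqrt{1-x^{2}-\dot{x}^{2}}=-k_{1}\,x,\qquad x^{2}+\dot{x}^{2}=1-k_{1}^{2}x^{2},
\]
valid for every $s\in\Bbb{R}$.

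Substituting these identities and $c=0$ into the fundamental relation (\ref{eq2}) yields, after a direct simplification,
\[
\frac{a}{2}x(-k_{1}x)+\frac{b}{2}\bigl(1-k_{1}^{2}x^{2}\bigr)=\alpha,
\]
equivalently
\[
\frac{b}{2}-\alpha=\frac{1}{2}\bigl(ak_{1}+bk_{1}^{2}\bigr)\,x^{2}(s)\qquad\text{for all }s\in\Bbb{R}.
\]
Since $\dot{x}(s)\neq 0$ everywhere, $x$ is strictly monotone; combined with $x(s)>0$ this makes $x^{2}$ a strictly monotone, hence non-constant, function of $s$. As the left-hand side above does not depend on $s$, the coefficient $ak_{1}+bk_{1}^{2}$ must vanish, and therefore $\alpha=\frac{b}{2}$. (This argument also absorbs the degenerate possibility $k_{1}=0$, in which case $ak_{1}+bk_{1}^{2}=0$ trivially and the identity immediately gives $\alpha=b/2$.)

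The computation is routine, so the only delicate points are bookkeeping ones: keeping the sign of the radical consistent when translating ``$k_{1}$ constant'' into $\sqrt{1-x^{2}-\dot{x}^{2}}=-k_{1}x$, and observing that it is precisely the hypothesis $\dot{x}\neq 0$ on all of $\Bbb{R}$ (not merely $x$ non-constant) that guarantees $x^{2}$ is non-constant and hence lets us kill the $x^{2}$-term. I do not anticipate a genuine obstacle beyond these mild points.
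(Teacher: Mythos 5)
Your proof is correct, and it takes a genuinely shorter route than the paper's. The paper differentiates the conserved quantity $-ak_{1}x^{2}+b(x^{2}+\dot{x}^{2})=2\alpha$ along the profile curve, uses $\dot{x}\neq0$ to deduce $k_{2}=-a/b$, integrates that back to get $\sqrt{1-x^{2}-\dot{x}^{2}}=\frac{a}{b}x+\beta$, and only then invokes the constancy of $k_{1}$ (together with $x$ non-constant) to force $\beta=0$ and conclude $k_{1}=k_{2}=-a/b$, whence $F=\frac{b}{2}$; it also treats $k_{1}=0$ as a separate case. You instead feed the constancy of $k_{1}$ directly into the first integral, obtaining $\frac{b}{2}-\alpha=\frac{1}{2}(ak_{1}+bk_{1}^{2})x^{2}(s)$, and kill the coefficient because $x^{2}$ is non-constant (indeed $(x^{2})^{\cdot}=2x\dot{x}\neq0$ everywhere, so even monotonicity is more than you need). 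This avoids the differentiation--integration detour, never touches $k_{2}$, and absorbs the $k_{1}=0$ case automatically; what it gives up is the by-product of the paper's argument, namely the explicit identification $k_{1}=k_{2}=-a/b$ of the resulting isoparametric surface, which is not needed for the statement or for its use in Theorem \ref{family}. The only bookkeeping points — $x>0$ from $x\geq0$ and $x\neq0$, and the sign convention $\sqrt{1-x^{2}-\dot{x}^{2}}=-k_{1}x$ — are handled correctly.
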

\begin{proof}
By Lemma \ref{PC} we get $-k_{1}x=\sqrt{1-x^{2}-\dot{x}^{2}}$ and $-k_{1}k_{2}x=x+\ddot{x}$.
Next we note that if $x$ is a solution of equation (\ref{eq2}) and $F(x,\dot{x})=\alpha$,
then
\[
ax\sqrt{1-x^{2}-\dot{x}^{2}}+b(x^{2}+\dot{x}^{2})=2\alpha.
\]

If $k_{1}=0$, we have that $x^{2}+\dot{x}^{2}=1$. It follows that $F(x,\dot{x})=\frac{b}{2}$.
Now suppose $k_{1}\not=0$. In this case, $-ak_{1}x^{2}+b(x^{2}+\dot{x}^{2})=2\alpha$.
Differentiating this equality we obtain
\[
-2ak_{1}x\dot{x}+2b(x+\ddot{x})\dot{x}=0\Leftrightarrow-2ak_{1}x+2b(x+\ddot{x})=0
\]
\[
\Leftrightarrow-a+b\frac{x+\ddot{x}}{k_{1}x}=0\Leftrightarrow-a-bk_{2}=0
\Leftrightarrow k_{2}=-\frac{a}{b}.
\]

It follows from the expression of $k_{2}$ that $\sqrt{1-x^{2}-\dot{x}^{2}}=\frac{a}{b}x+\beta$,
where $\beta\in\Bbb{R}$. Thus, $k_{1}=-\frac{a}{b}-\frac{\beta}{x}$. As $k_{1}$ is constant,
we deduce that $\beta=0$ as well as $k_{1}=k_{2}=-\frac{a}{b}$. Therefore,
\begin{eqnarray*}
F(x,\dot{x})&=&\frac{a}{2}x\sqrt{1-x^{2}-\dot{x}^{2}}+\frac{b}{2}(x^{2}+\dot{x}^{2})\\
&=&-\frac{a}{2}k_{1}x^{2}+\frac{b}{2}(1-k_{1}^{2}x^{2})=\frac{b}{2},
\end{eqnarray*}which finishes the proof of lemma.
\end{proof}

Finally we shall prove the Theorem $2.$

\begin{proof}[\textbf{Proof of the Theorem \ref{family}}]
If $x$ is solution of equation (\ref{eq2}) such that $F(x,\dot{x})=\alpha$ and
$\alpha\in(\max\{0,\frac{b}{2}\},\alpha_{0})$, it follows from Proposition \ref{LevelCurves}
that $(x,\dot{x})$ is a smooth, simple closed curve and $x(s)\not=0\; \forall\; s\in\Bbb{R}$.
Thereby, Lemma \ref{CinterC} enables us to suppose, without loss of generality, that
$\dot{x}(s)\not=0 \; \forall\; s\in\Bbb{R}$. Therefore, it follows from Lemma
\ref{FamilyCurves} that when $c=0$ the RLWS associated with $x$ is not isoparametric.
Moreover, by Theorem \ref{RLWsurfaces} we deduce that such surfaces are complete and
immersed. This completes the proof of the theorem.
\end{proof}

\end{document}